\definecolor{unbleu}{rgb}{0.03, 0.15, 0.4}
 \newtheorem{theorem}{Theorem}[section]
 \newtheorem{lemma}[theorem]{Lemma}
 \newtheorem{proposition}[theorem]{Proposition}
\theoremstyle{definition}
\newtheorem{remark}[theorem]{Remark}
\newtheorem{example}[theorem]{Example}
\newcommand{\R}{\mathbb R}
\newcommand{\Z}{\mathbb Z}
\def\Nset{\mathbb{N}}
\def\Rset{\mathbb{R}}
\def\M{\mathcal{M}}
\def\A{\mathcal{A}}
\def\Z{{Z}}
\def\W{{W}}
\def\B{\mathcal{B}}
\def\h{\mathfrak{h}}
\begin{document}

\title[]{Minimax aspects of optimizations in ergodic theory}

\author[S.Motonaga]{Shoya Motonaga}
\address{Department of Mathematical Sciences, Ritsumeikan University, 1-1-1 Noji-higashi, Kusatsu, Shiga 525-8577, Japan}
\email{motonaga@fc.ritsumei.ac.jp}


\subjclass[2020]{
 37A05, 
 37A50
}
\keywords{Minimax problem, ergodic optimization}

\begin{abstract}
We study optimization problems in ergodic theory from the view point of minimax problems. We give minimax characterizations of maximum ergodic averages involving time averages. Our approach also works for the abstract variational principle of generalized pressure functions which is proved by Bi\'{s} et al. (2022). We also describe the relationship between our minimax results and the Fenchel-Rockafellar duality.
\end{abstract}

\maketitle

\section{Introduction}
In ergodic theory, there are several kinds of optimization problems. A typical example is the study of equilibrium measures, which are invariant probability measures that achieve the variational principle for topological pressure, and recently more general frameworks for equilibrium measures (and a variational principle for pressure functions) has been developed \cite{B22, B23}. Another example is ergodic optimization \cite{Jen06a}, in which one maximizes the integral of a given function among the set of invariant probability measures. It is known that the maximum ergodic average is equal to the upper limit of the maximum of the time average with respect to time, and the maximum of the upper limit of the time average (see Proposition~\ref{prop:jenkinson} in Section~\ref{sec:main} below). These optimizing problems deal with maximizations, but it is interesting to consider problems that can be formulated as minimax problems. In this paper, we reveal that these problems can actually be interpreted as minimax problems. In particular, we obtain minimax characterizations of the maximum ergodic averages for ergodic optimization, and give a simpler proof of the abstract variational principle for generalized pressure functions due to Bi\'{s} et al. \cite{B22, B23} by using of the minimax theorem. It is worth to notice that recently a minimax problem has also appeared in the study of mean dimension theory \cite{T20}. We hope that our results indicate further developments in minimax problems related to ergodic theory.

The rest of this paper is organized as follows. In Section~\ref{sec:pre}, we provide prerequisites on minimax theory. In Section~\ref{sec:main}, we consider minimax aspects of ergodic optimization and obtain some characterizations for the maximum ergodic average, and give a simpler proof of the abstract variational principle due to Bi\'{s} et al. \cite{B22, B23}. A connection between our minimax approach and the Fenchel-Rockafellar duality is clarified in Section~\ref{sec:FR}.

\section{Prerequisites: Minimax theory}\label{sec:pre}
In this section, we briefly review some basic facts of minimax problems in a minimum necessary form for our application.
Let $F$ be a real-valued function defined on the product set $\Z\times \W$ of two arbitrary sets $\Z,\W$ (not necessarily topologized). Then we easily obtain the following inequality.
\begin{proposition}[Minimax inequality]\label{prop:minimax_ineq}
	\begin{equation}\label{eqn:minimax_ineq}
		\vspace{1mm}
		\ \inf_{z\in \Z} \left(\sup_{w\in \W} F(z,w)\right) \ge \sup_{w\in \W} \left(\inf_{z\in\Z} F(z,w)\right)
	\end{equation}	
\end{proposition}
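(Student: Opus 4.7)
The plan is to prove the inequality by the standard two-variable chase: start from the pointwise trivial bound
$\inf_{z \in Z} F(z, w) \le F(z', w) \le \sup_{w' \in W} F(z', w')$, valid for every $z' \in Z$ and $w \in W$, and then take a supremum on the left in $w$ and an infimum on the right in $z'$. Because both sides are decoupled (the left-hand expression no longer depends on $z'$ once we take the supremum in $w$, and vice versa), the passage to these extrema preserves the inequality.

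More concretely, I would first fix arbitrary $z' \in Z$ and $w \in W$ and record the two immediate bounds $F(z',w) \le \sup_{w' \in W} F(z',w')$ and $\inf_{z \in Z} F(z,w) \le F(z',w)$, chaining them to obtain
\begin{equation*}
\inf_{z \in Z} F(z,w) \;\le\; \sup_{w' \in W} F(z',w').
\end{equation*}
The right-hand side is independent of $w$, so I take the supremum over $w \in W$ to get $\sup_{w \in W} \inf_{z \in Z} F(z,w) \le \sup_{w' \in W} F(z', w')$. Since the left-hand side is now independent of $z'$, I take the infimum over $z' \in Z$ and arrive at \eqref{eqn:minimax_ineq}.

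I do not expect any real obstacle; the only thing to be slightly careful about is not to swap extrema at a stage where it would be illegal. In particular, each of the two passages (sup in $w$, then inf in $z'$) is applied to an inequality in which the other variable has already been eliminated, so the monotonicity of $\sup$ and $\inf$ under pointwise inequalities suffices and no joint convexity/compactness assumption is needed. This also shows that the inequality holds even when the extrema are taken with values in the extended real line $[-\infty, +\infty]$, which will be convenient when applying the result to possibly unbounded functionals later in the paper.
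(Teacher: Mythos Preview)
Your argument is correct and is essentially the same standard two-variable chase as in the paper: the paper compresses the steps by setting $\tilde F(z)=\sup_{w}F(z,w)$, observing $\inf_z \tilde F(z)\ge \inf_z F(z,w')$ for every $w'$, and then taking the supremum over $w'$. The only difference is the order in which the two extrema are taken, which is immaterial.
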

\begin{proof}
	Letting $\tilde{F}(z)=\sup_{w\in \W} F(z,w)$, we have
	\[
	\inf_{z\in\Z}\tilde{F}(z)\ge \inf_{z\in\Z} F(z,w'),\quad \forall w'\in\W.
	\]
	Taking the supremum on $w'\in\W$, we obtain the desired inequality.
\end{proof}

The main problem in minimax theory is when the equality in Eq.~\eqref{eqn:minimax_ineq} holds.
Von Neumann established the first progress of this kind in the study of game theory, and there are many 
generalization of his statement. Here we describe Fan's minimax theorem \cite{Fan53}. We recall the notion of convexity for a function on an arbitrary set.
The function $F$ is said to be convex on $\Z$, if for any two elements $z_1, z_2\in \Z$ and $\lambda\in [0,1]$ there exists $z_0\in\Z$ such that $f(z_0,w)\le \lambda f(z_1,w)+(1-\lambda)f(z_2,w)$ for all $w\in \W$.
Similarly, $F$ is said to be concave on $\W$, if for any two elements $w_1, w_2\in \W$ and $\lambda\in [0,1]$ there exists $w_0\in\W$ such that $f(z,w_0)\ge \lambda f(z,w_1)+(1-\lambda)f(z,w_2)$ for all $z\in \Z$.
\begin{theorem}[Minimax theorem \cite{Fan53}]\label{th:minmax}
	Let $Z$ be an arbitrary set (not topologized) and let $W$ be a compact Hausdorff space.
	Let $F$ be a real-valued function defined on the product set $\Z\times \W$ such that,
	for every $z\in Z$, $F(x,y)$ is upper semicontinuous on $W$.
	 If $F(z,w)$ is convex on $\Z$ 
		and concave on $\W$,
		then
		\begin{align}\label{eqn:minimax}
			\inf_{z\in \Z}\left(\sup_{w\in \W} F(z,w)\right)=\sup_{w\in \W}\left(\inf_{z\in \Z} F(z,w)\right).
		\end{align}
%
\end{theorem}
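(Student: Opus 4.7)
The minimax inequality of Proposition~\ref{prop:minimax_ineq} already gives one direction, so the plan is to establish the reverse inequality
\[
\inf_{z\in \Z}\sup_{w\in \W} F(z,w)\le \sup_{w\in \W}\inf_{z\in \Z} F(z,w).
\]
I would argue by contradiction: suppose there is $\alpha\in\R$ with
\[
\sup_{w\in \W}\inf_{z\in \Z} F(z,w)<\alpha<\inf_{z\in \Z}\sup_{w\in \W} F(z,w),
\]
and introduce the superlevel sets $K_z:=\{w\in \W:F(z,w)\ge\alpha\}$ for $z\in \Z$. By the upper semicontinuity of $F(z,\cdot)$, each $K_z$ is closed in the compact Hausdorff space $\W$, and hence compact.

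It suffices to show $\bigcap_{z\in\Z}K_z\ne\emptyset$: any $w^*$ in this intersection satisfies $\inf_{z\in\Z}F(z,w^*)\ge\alpha$, contradicting the choice of $\alpha$. By compactness of $\W$ and the finite intersection property, this reduces to showing that $\bigcap_{i=1}^n K_{z_i}\ne\emptyset$ for every finite family $\{z_1,\dots,z_n\}\subset \Z$, which I would prove by induction on $n$. The case $n=1$ is immediate: upper semicontinuity and compactness imply that $\sup_{w\in\W}F(z_1,w)$ is attained at some $w_1\in\W$, and this supremum is at least $\inf_{z\in\Z}\sup_{w\in\W}F(z,w)>\alpha$, so $w_1\in K_{z_1}$.

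The inductive step is the heart of the argument and the main obstacle, as it is precisely where both the convexity on $\Z$ and the concavity on $\W$ must be exploited. One natural approach is to linearize: with $z_1,\dots,z_n\in\Z$ fixed, set $G(\lambda,w):=\sum_{i=1}^n\lambda_i F(z_i,w)$ on $\Delta^{n-1}\times\W$, where $\Delta^{n-1}$ denotes the standard $(n-1)$-simplex. Iterating the convexity on $\Z$ produces, for each $\lambda\in\Delta^{n-1}$ with rational entries (and in general via a density argument), a point $z_\lambda\in\Z$ with $F(z_\lambda,\cdot)\le G(\lambda,\cdot)$ pointwise on $\W$, whence $\inf_\lambda\sup_w G(\lambda,w)\ge\inf_{z\in\Z}\sup_{w\in\W}F(z,w)>\alpha$. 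One then hopes to invoke a more elementary minimax principle on $\Delta^{n-1}\times\W$ to obtain a common $w^*\in\W$ with $F(z_i,w^*)\ge\alpha$ for every $i$; carrying this out cleanly requires the generalized concavity on $\W$ to interpolate between the witnesses produced by the induction hypothesis, exactly as in Fan's original argument in~\cite{Fan53}, to which I would ultimately defer for the delicate combinatorial step.
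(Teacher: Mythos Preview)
The paper does not prove this theorem: it is quoted from~\cite{Fan53} as a prerequisite, with no argument supplied beyond the statement itself and the remark that Fan's paper treats more general cases. There is therefore no ``paper's own proof'' to compare your attempt against.

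As for the attempt itself: the overall architecture you describe---contradiction via a separating level $\alpha$, passage to the closed superlevel sets $K_z=\{w:F(z,w)\ge\alpha\}$, reduction to the finite intersection property by compactness of $\W$, and induction on the size of the finite family---is indeed Fan's original scheme. But you explicitly stop short of the inductive step, writing that you would ``ultimately defer'' to~\cite{Fan53} for it. That step is the entire content of the theorem; everything preceding it is bookkeeping. So what you have written is an outline that correctly identifies where the difficulty lies, not a proof. If you want a self-contained argument, you must actually carry out the two-point case $K_{z_1}\cap K_{z_2}\ne\emptyset$ using the generalized concavity on $\W$ (this is where one connects a maximizer of $F(z_1,\cdot)$ to a maximizer of $F(z_2,\cdot)$ along a ``concave path'' and uses convexity on $\Z$ to control the value along it), and then show how the general inductive step reduces to this.
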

Note that in \cite{Fan53} more detailed cases to hold Eq.~\eqref{eqn:minimax} are considered. 
We remark that any cases in \cite{Fan53} cannot be directly applied to Theorem~\ref{th:EO} in Section~\ref{sec:main} below.

	\section{Minimax aspects of ergodic optimization and the abstract variational principle}\label{sec:main}
	\subsection{Maximum ergodic averages}
	In this section, we obtain minimax characterizations of the maximum ergodic average.
	Let $(X,d)$ be a compact metric space and let $T:X\to X$ be a continuous self-map.
	Let $\M(X)$ denote the set of Borel probability measures on $X$.
	Then $\M(X)$ is a compact subset of $C^*(X)$ in the weak $*$-topology where $C(X)$ is the set of continuous functions on $X$ equipped with the supremum norm.
	We write $\M_{T}(X)$ for the set of $T$-invariant Borel probability measures on $X$.
	In the study of ergodic optimization, for $\phi\in C(X)$, the quantity
	\[
	\alpha(\phi)=\sup_{\mu\in\M_T(X)} \int \phi\  d\mu,
	\]
	called as the \textit{maximum ergodic average} of $\phi$, and its miximizing measures are investigated.
	See \cite{Jen06a, Jen17}  for the details of ergodic optimization.
	We recall a well-known characterization of the maximum ergodic average
	involving time average.
	\begin{proposition}[\cite{Jen06a}]\label{prop:jenkinson} For $\phi\in C(X)$, the maximum ergodic average $\alpha(\phi)$ satisfies
	\begin{align*}
		\alpha(\phi)
		=\sup_{x\in \mathrm{Reg}_{T,\phi}} \lim_{n\to \infty} \frac{S_n\phi(x)}{n}
		=\sup_{x\in X} \limsup_{n\to \infty} \frac{S_n\phi(x)}{n}
		=\limsup_{n\to \infty} \sup_{x\in X} \frac{S_n\phi(x)}{n},
	\end{align*}
	where $S_n\phi=\sum_{i=0}^{n-1} \phi\circ T^i$ and $\mathrm{Reg}_{T,\phi}=\{x\in X;\ \lim_{n\to \infty} (1/n)S_n\phi(x)\ \text{exists}\}$.
	\end{proposition}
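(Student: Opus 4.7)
The plan is to close the ring of inequalities
$$\alpha(\phi) \leq \sup_{x\in\mathrm{Reg}_{T,\phi}} \lim_{n\to\infty} \frac{S_n\phi(x)}{n} \leq \sup_{x\in X} \limsup_{n\to\infty} \frac{S_n\phi(x)}{n} \leq \limsup_{n\to\infty} \sup_{x\in X} \frac{S_n\phi(x)}{n} \leq \alpha(\phi),$$
denoting the middle three quantities by $I$, $II$, $III$; equality of all four follows at once.

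For $\alpha(\phi) \leq I$, I would invoke Birkhoff's pointwise ergodic theorem: for any $T$-invariant ergodic probability measure $\mu$ there is a $\mu$-conull set of points $x$ satisfying $\lim_n S_n\phi(x)/n = \int \phi \, d\mu$, and every such $x$ belongs to $\mathrm{Reg}_{T,\phi}$, so $\int \phi \, d\mu \leq I$. Because $\M_T(X)$ is weak-$*$ compact and $\mu \mapsto \int \phi \, d\mu$ is weak-$*$ continuous, the supremum defining $\alpha(\phi)$ is attained, and ergodic decomposition lets me reduce it to a supremum over ergodic measures, giving $\alpha(\phi) \leq I$. The inequality $I \leq II$ is immediate since $\mathrm{Reg}_{T,\phi} \subseteq X$ and the limit coincides with the limsup on $\mathrm{Reg}_{T,\phi}$. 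The inequality $II \leq III$ is the elementary estimate $\sup_x \limsup_n a_n(x) \leq \limsup_n \sup_x a_n(x)$ applied to $a_n(x) = S_n\phi(x)/n$.

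The only substantive step, and hence the main obstacle, is the closing inequality $III \leq \alpha(\phi)$. I would treat it by a Krylov--Bogolyubov-type construction. Using compactness of $X$ and continuity of $S_n\phi$, pick $x_n \in X$ attaining $\sup_{x\in X} S_n\phi(x)/n$, and form the empirical measures
$$\mu_n = \frac{1}{n}\sum_{i=0}^{n-1} \delta_{T^i x_n} \in \M(X),$$
for which $\int \phi \, d\mu_n = S_n\phi(x_n)/n$. Choosing a subsequence along which $\int \phi \, d\mu_{n_k} \to III$ and refining once more by the weak-$*$ compactness of $\M(X)$, I may assume $\mu_{n_k} \to \mu$ weakly-$*$ in $\M(X)$. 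The identity $\mu_n - T_\ast \mu_n = n^{-1}(\delta_{x_n} - \delta_{T^n x_n})$ tends to zero in total variation, so the weak-$*$ limit $\mu$ lies in $\M_T(X)$; weak-$*$ continuity of integration against $\phi \in C(X)$ then yields $\int \phi \, d\mu = III$, whence $III \leq \alpha(\phi)$ and the chain is complete.
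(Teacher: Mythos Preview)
Your proof is correct and follows essentially the same approach that the paper attributes to the original proof of this (cited) proposition: the paper does not give a standalone proof of Proposition~\ref{prop:jenkinson}, but within the proof of Theorem~\ref{th:EO} it sketches exactly your two substantive ingredients---the Krylov--Bogolyubov construction with empirical measures $\mu_n=\frac{1}{n}\sum_{i=0}^{n-1}\delta_{T^i x_n}$ for the inequality $\limsup_n\sup_x S_n\phi(x)/n\le\alpha(\phi)$, and the pointwise ergodic theorem applied with a maximizing ergodic measure $\mu_{\max}$ for the reverse direction. The only cosmetic difference is that the paper invokes the existence of a maximizing ergodic measure directly (via extreme points of the compact convex set $\M_T(X)$), whereas you phrase the same reduction through ergodic decomposition.
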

	We now consider minimax aspects of the maximum ergodic average $\alpha(\phi)$.
	The following result implies that we can replace ``$\limsup$" in Proposition~\ref{prop:jenkinson}
	with ``$\inf$" and thus 
	the maximum ergodic average can be expressed as an optimal value of  a minimax (maxmin)
	 problem.
	\begin{theorem}\label{th:EO} For $\phi\in C(X)$, the maximum ergodic average $\alpha(\phi)$ satisfies
		\begin{align*}
			\alpha(\phi)
		=\sup_{x\in X} \inf_{n\in \Nset} \frac{S_n\phi(x)}{n}
		=\inf_{n\in \Nset} \sup_{x\in X} \frac{S_n\phi(x)}{n} 
		\end{align*}
	\end{theorem}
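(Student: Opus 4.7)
The plan is to prove the two equalities separately, using Proposition~\ref{prop:jenkinson} as a black box. For the right-hand equality, set $a_n := \sup_{x\in X} S_n\phi(x)$. The cocycle identity $S_{n+m}\phi(x)=S_n\phi(x)+S_m\phi(T^n x)$ gives, after taking $\sup_x$, the subadditivity $a_{n+m}\le a_n+a_m$. Fekete's lemma then yields $\lim_n a_n/n = \inf_n a_n/n$, and the third equality in Proposition~\ref{prop:jenkinson} identifies this common value with $\alpha(\phi)$. This is the routine half.

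For the left-hand equality, the bound $\sup_x \inf_n S_n\phi(x)/n \le \alpha(\phi)$ is immediate, since $\inf_n \le \limsup_n$ pointwise, combined with Proposition~\ref{prop:jenkinson}. The substance is the reverse inequality, which I would reformulate as follows: for every $\varepsilon>0$, exhibit a single $x^*\in X$ with $S_n\phi(x^*) \ge n(\alpha(\phi)-\varepsilon)$ for \emph{every} $n\ge 1$. To this end, I pass to $\psi := \phi - (\alpha(\phi)-\varepsilon)$, so that $\alpha(\psi)=\varepsilon>0$. By the first equality in Proposition~\ref{prop:jenkinson} there exists $x_0\in \mathrm{Reg}_{T,\psi}$ with $\lim_{n\to\infty} S_n\psi(x_0)/n > 0$; in particular $S_n\psi(x_0)\to +\infty$. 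The sequence $(S_k\psi(x_0))_{k\ge 0}$, with the convention $S_0=0$, is therefore bounded below and attains its minimum at some index $m\ge 0$.

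Set $x^* := T^m x_0$. The cocycle identity then gives
\[
S_n\psi(x^*) \;=\; S_{m+n}\psi(x_0) - S_m\psi(x_0) \;\ge\; 0 \qquad (n\ge 1)
\]
by the minimality of $m$. Translating back to $\phi$ yields $\inf_{n\ge 1} S_n\phi(x^*)/n \ge \alpha(\phi)-\varepsilon$, and letting $\varepsilon\downarrow 0$ closes the argument. The main obstacle, conceptually, is producing a \emph{single} point that beats $\alpha(\phi)-\varepsilon$ at every finite $n$ simultaneously rather than merely asymptotically; this is what forces the shift to the running-sum minimizer $m$. Since the paper explicitly notes that Fan's theorem does not apply directly here, an elementary dynamical argument of this flavour appears to be the natural route.
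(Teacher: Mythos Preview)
Your argument is correct, and both halves take a genuinely different route from the paper's.

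For the equality $\alpha(\phi)=\inf_n\sup_x S_n\phi(x)/n$, the paper does not invoke subadditivity and Fekete's lemma. Instead it sets up the minimax inequality (Proposition~\ref{prop:minimax_ineq}) for $F(n,\mu)=\int (S_n\phi/n)\,d\mu$ on $\Nset\times\M(X)$: since $\int S_n\phi/n\,d\mu=\int\phi\,d\mu$ for invariant $\mu$, one gets $\alpha(\phi)\le\sup_{\mu\in\M(X)}\inf_n F\le\inf_n\sup_x S_n\phi(x)/n$, and then closes the chain with $\inf_n\le\limsup_n$ and the Krylov--Bogolyubov bound \eqref{eqn:KB}. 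Your Fekete argument is shorter and purely combinatorial; the paper's route is chosen to illustrate its minimax theme.

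For the equality $\alpha(\phi)=\sup_x\inf_n S_n\phi(x)/n$, the paper isolates a separate Lemma~\ref{lem:key} asserting $\sup_x\inf_n=\sup_x\liminf_n$, proved by contradiction: assuming strict inequality, it builds recursively a sequence $x_{i+1}=T^{n_i}x_i$ along which the finite averages $S_{n_i}\phi(x_i)/n_i$ strictly increase toward the target $\gamma$, and then concatenates to reach a contradiction. Your normalization $\psi=\phi-(\alpha(\phi)-\varepsilon)$ followed by shifting to the global minimizer of the running Birkhoff sum is considerably more direct: it produces in one stroke a point $x^*$ with $S_n\psi(x^*)\ge 0$ for all $n$, bypassing both the contradiction and the recursion. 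The paper's Lemma~\ref{lem:key} has the minor advantage of being stated as an identity independent of $\alpha(\phi)$, but your construction is the cleaner proof of the theorem itself.
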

	For the proof of Theorem~\ref{th:EO}, we need the following lemma.
	\begin{lemma}\label{lem:key}
		For $\phi\in C(X)$, it holds that
		\[
			\sup_{x\in X} \inf_{n\in \Nset} \frac{S_n\phi(x)}{n}=\sup_{x\in X} \liminf_{n\to +\infty} \frac{S_n\phi(x)}{n}.
		\]
	\end{lemma}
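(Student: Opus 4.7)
The inequality $\sup_{x}\inf_{n}\le\sup_{x}\liminf_{n}$ is immediate from the pointwise fact $\inf_{n\ge 1} a_n\le\liminf_n a_n$, so only the reverse direction needs work. The strategy is to fix an $x\in X$ that (up to $\varepsilon$) achieves the supremum on the right, and to exhibit an explicit $y\in X$ at which the \emph{full} infimum $\inf_{n\ge 1}S_n\phi(y)/n$ is not much smaller than $\liminf_n S_n\phi(x)/n$. The natural choice is $y=T^k x$ for a carefully chosen shift $k$, the bridge being the cocycle identity $S_{n+k}\phi(x)=S_k\phi(x)+S_n\phi(T^k x)$, which converts information about the large-$n$ tail of $S_n\phi(x)/n$ into information about all of $S_n\phi(T^k x)/n$.

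Concretely, set $L:=\liminf_n S_n\phi(x)/n$ and introduce the ``corrected'' sequence $g_n:=S_n\phi(x)-n(L-\varepsilon)$. The definition of $L$ gives $g_n\ge 0$ for all sufficiently large $n$, so $(g_n)_{n\ge 1}$ is bounded below; the plan is then to pick $k\ge 1$ with $g_k$ within $\varepsilon$ of $\inf_{n\ge 1} g_n$. Rewriting the cocycle identity in terms of $g$ produces the pointwise bound
\[
S_n\phi(T^k x)=g_{n+k}-g_k+n(L-\varepsilon)\ge -\varepsilon+n(L-\varepsilon),
\]
valid for \emph{every} $n\ge 1$. Dividing by $n$ gives $S_n\phi(T^k x)/n\ge L-2\varepsilon$ uniformly in $n$; taking $\inf_n$, then $\sup_{x\in X}$ (noting $T^k x\in X$), and finally $\varepsilon\to 0$ yields the desired inequality.

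The only nonroutine point is the geometric insight behind the choice of $k$: the small-$n$ values of $S_n\phi(y)/n$ which are responsible for the gap between $\inf_n$ and $\liminf_n$ can be absorbed by shifting $x$ forward to a time where $S_k\phi(x)$ is nearly minimal relative to the affine line $n\mapsto n(L-\varepsilon)$; the liminf hypothesis is precisely what ensures such a minimizing shift exists. Once $k$ is fixed this way, the telescoping identity does all the work, and neither continuity of $\phi$ nor compactness of $X$ is used --- these enter only later, when one exchanges $\sup_x$ and $\inf_n$ in Theorem~\ref{th:EO}.
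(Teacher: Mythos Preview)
Your proof is correct and takes a genuinely different, more direct route than the paper's.

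The paper argues by contradiction: assuming $\sup_x f(x)<\gamma:=\sup_x g(x)$, it works on the (nonempty) level set $G=g^{-1}(\gamma)$ and, for $x\in G$, iteratively produces points $x_{i+1}=T^{n_i}x_i$ with $f(x_i)$ strictly increasing to some $\beta\le\gamma$; a telescoping sum then forces $\beta=\gamma$, contradicting the assumed strict gap. Crucially, the paper needs $G\neq\emptyset$, which it obtains from \eqref{eqn:key2} --- i.e.\ from the pointwise ergodic theorem applied to a maximizing ergodic measure. Your argument avoids all of this: you pick $x$ only $\varepsilon$-close to the supremum, pass to the corrected sequence $g_n=S_n\phi(x)-n(L-\varepsilon)$, choose a \emph{single} near-minimizing index $k$, and read off $\inf_n S_n\phi(T^k x)/n\ge L-2\varepsilon$ directly from the cocycle identity.

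The payoff is that your proof is self-contained and does not invoke the ergodic theorem, so Lemma~\ref{lem:key} becomes logically independent of the measure-theoretic input in~\eqref{eqn:key2}. Your remark that neither continuity of $\phi$ nor compactness of $X$ is used is also correct (boundedness of $\phi$ suffices to keep $L$ finite), whereas the paper's route, via the existence of a maximizing ergodic measure, does lean on compactness. The paper's iterative construction is more elaborate but does not seem to buy anything extra here; your single-shift argument is both shorter and more general.
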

	We will prove Lemma~\ref{lem:key} after the proof of Theorem~\ref{th:EO}.
	We remark that Theorem~\ref{th:EO} relies on Lemma~\ref{lem:key} and the ideas of the proof of Proposition~\ref{prop:jenkinson} with the minimax inequality.
	\begin{proof}[Proof of Theorem~\ref{th:EO}]
		We first see that
		\begin{align}\label{eqn:KB}
			\limsup_{n\to+\infty} \sup_{x\in X} \frac{S_n\phi(x)}{n}\le \sup_{\mu\in\M_T(X)} \int \phi\ d\mu
		\end{align}
		as in the original proof of Proposition~\ref{prop:jenkinson}. In fact, letting $\mu_n:=\frac{1}{n}\sum_{i=0}^{n-1} \delta_{T^i (x_n)}$ where $\delta_{p}$ stands for the Dirac measure supported at a point $p\in X$ and $x_n$ is a maximizer of $S_n \phi(x)$, we obtain an accumulation point $\mu^*\in \M(X)$ of $\{\mu_n\}$ and actually it satisfies $\mu^*\in \M_T(X)$, which yields the above inequality.
		
		Next, we see that
		\begin{align}\label{eqn:EOminmax}
		\alpha(\phi) 
		=\inf_{n\in \Nset} \sup_{x\in X} \frac{S_n\phi(x)}{n}
		\end{align}
		follows from the minimax inequality.
		Let $\Z=\Nset$, $\W=\M(X)$, and $F(n, \mu)=\int \frac{S_n \phi}{n} d\mu$.
		Trivially,
		\begin{align}\label{eqn:EO1}
			\sup_{\mu\in\M_T(X)} \inf_{n\in\Nset}\int \frac{S_n\phi}{n}  \ d\mu
			\le \sup_{\mu\in\M(X)} \inf_{n\in\Nset} \int \frac{S_n\phi}{n}  \ d\mu.
		\end{align}
		 The left hand side of \eqref{eqn:EO1} becomes $\alpha(\phi)$
		 since $\frac{1}{n} \int S_n\phi \ d\mu=\int \phi \ d\mu$ holds for each $\mu\in\M_T$.
		 Moreover, by the minimax inequality \eqref{eqn:minimax_ineq} (Proposition~\ref{prop:minimax_ineq}), we have
		\begin{align}\label{eqn:EO2}
		\sup_{\mu\in\M(X)}\inf_{n\in\Nset}\int \frac{S_n \phi}{n}\ d\mu
		 \le \inf_{n\in\Nset}\sup_{\mu\in\M(X)} \int \frac{S_n \phi}{n}\ d\mu
		 =\inf_{n\in\Nset}\sup_{x\in X} \frac{S_n \phi(x)}{n}.
		\end{align}
		Note that for any $\phi'\in C(X)$ a Dirac measure supported at a maximum point of $\phi'$ attains the maximum of $\sup_{\mu\in \M(X)} \int \phi' \ d\mu$.
		It is clear that
		\begin{align}\label{eqn:EO3}
			\inf_{n\in\Nset}\sup_{x\in X} \frac{S_n \phi(x)}{n}
			\le \liminf_{n\to \infty}\sup_{x\in X} \frac{S_n \phi(x)}{n}
			\le \limsup_{n\to \infty}\sup_{x\in X} \frac{S_n \phi(x)}{n}
		\end{align}
		holds, and thus all inequalities in \eqref{eqn:KB}, \eqref{eqn:EO1}, \eqref{eqn:EO2}, \eqref{eqn:EO3} become equalities.
		
		Now we will prove that
		\begin{align}\label{eqn:key}
			\alpha(\phi)=\sup_{x\in X} \inf_{n\in \Nset} \frac{S_n\phi(x)}{n}.
		\end{align}
		We see that the proof of of Proposition~\ref{prop:jenkinson} actually implies that
		\begin{align}\label{eqn:key2}
			\alpha(\phi)=\sup_{x\in X} \liminf_{n\to+\infty} \frac{S_n\phi(x)}{n}
			 =\liminf_{n\to\infty}  \frac{S_n\phi(y)}{n}
		\end{align}
		for $\mu_{\rm max}$ a.e. $y$ with a maximizing ergodic measure $\mu_{\rm max}$
		since it follows from the pointwise ergodic theorem that
		\[
		\alpha(\phi)=\liminf_{n\to\infty}  \frac{S_n\phi(y)}{n}
		\]
		for $\mu_{\rm max}$ a.e. $y$, and we have
		\begin{align*}
		\alpha(\phi)=\liminf_{n\to\infty}  &\frac{S_n\phi(y)}{n}\le \sup_{x\in X} \liminf_{n\to\infty}  \frac{S_n\phi(x)}{n}\\
		&\le \sup_{x\in X} \limsup_{n\to\infty}  \frac{S_n\phi(x)}{n}
		\le\limsup_{n\to\infty}  \sup_{x\in X}  \frac{S_n\phi(x)}{n}\le \alpha(\phi)
		\end{align*}
		by \eqref{eqn:KB}.
		Using Lemma~\ref{lem:key}, we have
		\begin{align*}
		\alpha(\phi)=\sup_{x\in X} \liminf_{n\to \infty} \frac{S_n\phi(x)}{n}=\sup_{x\in X} \inf_{n\in\Nset} \frac{S_n\phi(x)}{n}.
		\end{align*}
		We remark that we can obtain Eq.~\eqref{eqn:EOminmax} from \eqref{eqn:KB}, \eqref{eqn:EO3}, \eqref{eqn:key} and the minimax inequality \eqref{eqn:minimax_ineq},
		in which case we need the pointwise ergodic theorem.
	\end{proof}

	\begin{proof}[Proof of Lemma~\ref{lem:key}]
		Let $f(x):=\inf_{n\in \Nset} \frac{S_n\phi(x)}{n}$ and $g(x):=\liminf_{n\to +\infty} \frac{S_n\phi(x)}{n}$.
		It is trivial that $\sup_{x\in X}f(x)\le\sup_{x\in X} g(x)$.
		Assume that
		\begin{align}\label{eqn:contradict}
			\sup_{x\in X}f(x)<\sup_{x\in X} g(x).
		\end{align}
		Let $\gamma=\sup_{x\in X} g(x)$ and $G=g^{-1}(\gamma)$. Note that $G$ is not empty by \eqref{eqn:key2}. In addition, it is easy to see that $g(T(x))=g(x)$ for all $x\in X$, which implies that $G$ is $T$-invariant.
		By \eqref{eqn:contradict}, we have
		\begin{align}\label{eqn:less}
		f(x)<g(x)=\gamma,\quad x\in G.
		\end{align}
		This implies that for each $x\in G$ there exists $n^*\in \Nset$ such that
		\begin{align}\label{eqn:finite}
			\frac{S_{n^*}\phi(x)}{n^*}=f(x).
		\end{align}
		Fix $x\in X$ and let $n_0\in \Nset$ be the largest positive integer which satisfies \eqref{eqn:finite}.
		Note that if infinitely many $n_0\in \Nset$ satisfy \eqref{eqn:finite}
		then it holds that $g(x)=\liminf_{n\to \infty} \frac{S_n\phi(x)}{n} \le f(x)$,
		which is impossible by \eqref{eqn:less}.
		Then we have
		\[
			\frac{S_{n_0}\phi(x)}{n_0}<\frac{S_{n_0+m}\phi(x)}{n_0+m},\quad m\in\Nset.
		\]
		We compute
		\[
			n_0 S_{n_0}\phi(x)+m S_{n_0}\phi(x)< n_0 S_{n_0}\phi(x)+n_0 S_m \phi(T^{n_0}(x))
		\]
		and thus
		\[
			\frac{S_{n_0}\phi(x)}{n_0}<\frac{S_{m}\phi(T^{n_0}(x))}{m},\quad m\in\Nset.
		\]
		Since $x\in G$ and $G$ is $T$-invariant, $x_1:=T^{n_0}(x)$ also belongs to $G$.
		Therefore, we can repeat the same discussion and deduce that there exists a positive integer sequence $\{n_i\}_{i\in\Nset}$ such that
		\begin{align}\label{eqn:inc}
			\frac{S_{n_0}\phi(x)}{n_0}<\frac{S_{n_1}\phi(x_1)}{n_1}<\cdots<\frac{S_{n_i}\phi(x_i)}{n_i}<\cdots\quad (<\gamma)
		\end{align}
		with
		\begin{align}\label{eqn:f}
			\frac{S_{n_i}\phi(x_i)}{n_i}=f(x_i),
		\end{align}
		where $x_{i+1}=T^{n_i}(x_i)$ for $i\in\Nset_0=\Nset\cup\{0\}$ with $x_0:=x$.
		By \eqref{eqn:inc}, the sequence $\{(1/n_i)S_{n_i}(x_i)\}_{i\in\Nset_0}$ is strictly increasing and bounded above by $\gamma$,
		thus it has a supremum $\beta$ and
		\begin{align}\label{eqn:b_g}
			\beta\le \gamma
		\end{align} holds.
		Moreover, by $(1/n_i)S_{n_i}\phi(x_i)\le\beta$ for all $i\in\Nset_0$, we have
		\begin{align}\label{eqn:gamma}
			S_{n_i}\phi(x_i)\le n_i\beta,\quad i\in\Nset_0.
		\end{align}
		From \eqref{eqn:gamma}, for $k\in\Nset$, we have
		\[
		S_{m_k}\phi(x)=\sum_{i=0}^k S_{n_i}\phi(x_i)\le m_k\beta,
		\]
		i.e.,
		\[
			\frac{S_{m_k}\phi(x)}{m_k}\le \beta,
		\]
		where $m_k=\sum_{i=1}^k n_i$.
		Letting $k\to\infty$, we obtain
		\[
			\gamma=g(x)=\liminf_{n\to \infty} \frac{S_n\phi(x)}{n}\le \liminf_{k\to \infty}\frac{S_{m_k}\phi(x)}{m_k}\le \beta.
		\]
		Combining with \eqref{eqn:b_g}, we have $\beta=\gamma$.
		
		On the other hand, from \eqref{eqn:inc} and \eqref{eqn:f},
		we see that
		\[
			\sup_{m\in\Nset} f(T^m(x))\ge \beta=\gamma,
		\]
		which contradicts to \eqref{eqn:contradict}.
	\end{proof}

		\begin{remark}\label{rmk:usc}
		The function $\inf_{n\in\Nset} (1/n)S_n\phi(x)$ has a maximum point in $X$.
		In fact, by the continuity of $\frac{1}{n} S_n\phi$ for each $n\in\Nset$, we see that
		$\{x\in X; (1/n)S_n\phi(x)\ge c\}$ is closed for each $c\in\Rset$ and thus
		\[
		\left\{x\in X; \inf_{n\in\Nset} (1/n)S_n\phi(x) \ge c\right\}
		=\bigcap_{n\in\Nset} \left\{x\in X; (1/n)S_n\phi(x)\ge c\right\}
		\]
		is also closed, which yields the upper semicontinuity of $\inf_{n\in\Nset} (1/n)S_n\phi(x)$.
		Note that all the maximum point of $\inf_{n\in\Nset} (1/n)S_n\phi(x)$ attain
		the maximum of $\liminf_{n\to\infty} (1/n)S_n\phi(x)$
		since
		\[
		\inf_{n\in\Nset} (1/n)S_n\phi(x)\le \liminf_{n\to\infty} (1/n)S_n\phi(x) \le \alpha(\phi),\quad x\in X
		\]
		However, not all the maximum point of $\liminf_{n\to\infty} (1/n)S_n\phi(x)$
		do not take the maximum point of $\inf_{n\in\Nset} (1/n)S_n\phi(x)$ (see Example~\ref{exm:counter}).
	\end{remark}
	\begin{remark}\label{rmk:supsup}
		The maximum ergodic average $\alpha(\phi)$ for $\phi\in C(X)$ does not coincides
		with neither
		\[
		\sup_{x\in X} \sup_{n\in\Nset} \frac{S_n\phi(x)}{n}=\sup_{n\in \Nset} \sup_{x\in X} \frac{S_n\phi(x)}{n}
		\]
		nor
		\[
		\inf_{x\in X} \inf_{n\in\Nset} \frac{S_n\phi(x)}{n}=\inf_{n\in \Nset} \inf_{x\in X} \frac{S_n\phi(x)}{n}
		\]
		in general (see Example~\ref{exm:counter} below).
	\end{remark}
	\begin{example}\label{exm:counter}
		Consider a subshift $\Sigma_A\subset \Omega=\{0,1,a\}^{\Nset_0}$ with the transition matrix
		\begin{align*}
			A=\begin{pmatrix}
				0 & 1 & 0\\
				1 & 0 &  0\\
				0 & 1 &  0
				\end{pmatrix},
		\end{align*}
		where $a\neq 0,1$ is a real parameter.
		We denote the shift map on $\Sigma_A$ by $\sigma$.
		Then $\Sigma_A$ contains only three points, $x=(10)^\infty, (01)^\infty, a(10)^\infty$.
		Let $\phi$ be a locally constant function given by $\phi(x)=x_0$ for $x=x_0 x_1\ldots\in \Sigma_A$.
		It is clear that the maximum ergodic average is $\alpha(\phi)=1/2$.
		Now we compute $(1/n)S_n\phi(x)$ for each $x=(10)^\infty, (01)^\infty, a(10)^\infty$.
		We have
		\begin{align*}
			\frac{S_n\phi((01)^\infty)}{n}&=\left\{
\begin{array}{ll}
\frac{1}{2} & (n:{\rm even})\\
\frac{1}{2}-\frac{1}{2n}& (n:{\rm odd})
\end{array}
\right.
,\\ 
\frac{S_n\phi((10)^\infty)}{n}&=\left\{
\begin{array}{ll}
\frac{1}{2} & (n:{\rm even})\\
\frac{1}{2}+\frac{1}{2n} & (n:{\rm odd})
\end{array}
\right.
,
\\
\frac{S_n\phi(a(10)^\infty)}{n}&=\left\{
\begin{array}{ll}
\frac{1}{2}+\frac{a}{n}  & (n:{\rm even})\\
\frac{1}{2}+\frac{a}{n}-\frac{1}{2n} & (n:{\rm odd})
\end{array}
\right.
.
		\end{align*}
		Letting $n\to \infty$, it is easy to see that Proposition~\ref{prop:jenkinson} holds in this example.
		Fix $n\in\Nset$. We see that 
		\[
		\sup_{x\in \Sigma_A} \frac{S_n\phi(x)}{n}
			=\left\{
\begin{array}{ll}
\frac{1}{2}  & (a<0)\\
\frac{1}{2}+\frac{a}{n} & (a> 0)
\end{array}
\right.
		\]
		holds when $n$ is even, and that
		\[
		\sup_{x\in \Sigma_A} \frac{S_n\phi(x)}{n}
			=\left\{
\begin{array}{ll}
\frac{1}{2}+\frac{1}{2n}  & (a<1)\\
\frac{1}{2}+\frac{2a-1}{2n} & (a> 1)
\end{array}
\right.
		\]
		holds when $n$ is odd.
		Thus it holds that
		\[
			 \inf_{n\in \Nset}\sup_{x\in \Sigma_A} \frac{S_n\phi(x)}{n}=\frac{1}{2}=\alpha(\phi)
		\]
		for any $a\neq 0,1$. Moreover, we compute
		\begin{align*}
			\inf_{n\in \Nset} \frac{S_n\phi((01)^\infty)}{n}&=0,\quad
			\inf_{n\in \Nset} \frac{S_n\phi((10)^\infty)}{n}=\frac{1}{2},\\
			\inf_{n\in \Nset} \frac{S_n\phi(a(01)^\infty)}{n}
			&=\left\{
\begin{array}{ll}
\frac{1}{2}  & (a>\frac{1}{2})\\
\frac{1}{2}+\frac{2a-1}{2n} & (a\le \frac{1}{2})
\end{array}
\right.
		\end{align*}
		with $a\neq 0,1$. Therefore, we obtain
		\[
			\sup_{x\in \Sigma_A} \inf_{n\in \Nset} \frac{S_n\phi(x)}{n}=\frac{1}{2}=\alpha(\phi).
		\]
		Here we remark that
		\[
			\inf_{n\in \Nset} \frac{S_n\phi((01)^\infty)}{n}=0<\liminf_{n\to \infty} \frac{S_n\phi((01)^\infty)}{n}=\alpha(\phi)=\frac{1}{2}
		\]
		(see Remark~\ref{rmk:usc}).
		In addition, note that
		\[
		\sup_{x\in \Sigma_A} \sup_{n\in \Nset} \frac{S_n\phi(x)}{n}\ge \sup_{n\in \Nset} \frac{S_n\phi((10)^\infty)}{n}=1>\alpha(\phi)=\frac{1}{2}
		\]
		and
		\[
		\inf_{x\in \Sigma_A} \inf_{n\in \Nset} \frac{S_n\phi(x)}{n}\le \inf_{n\in \Nset} \frac{S_n\phi((01)^\infty)}{n}=0<\alpha(\phi)=\frac{1}{2}
		\]
		 (see Remark~\ref{rmk:supsup}).
	\end{example}

	The following another characterization of the maximum ergodic average $\alpha(\phi)$
	also can be obtained by the minimax approach.
	\begin{theorem}\label{th:dual} For each $\phi\in C(X, \R)$, 
	\begin{align*}
	\alpha(\phi)=\inf_{\psi\in C(X;\R)} \sup_{x\in X}(\phi(x)+\psi(x)-\psi\circ T(x))
	\end{align*}
\end{theorem}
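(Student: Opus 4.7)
The plan is to realize the right-hand side as one side of a minimax problem and collapse it using Theorem~\ref{th:minmax}. Set $\Z := C(X;\R)$ and $\W := \M(X)$, and define $F : \Z \times \W \to \R$ by
\[
F(\psi, \mu) := \int (\phi + \psi - \psi \circ T)\, d\mu = \int \phi\, d\mu + \int \psi\, d\mu - \int \psi\circ T\, d\mu.
\]
Then $F$ is affine in each variable (so simultaneously convex in $\psi$ and concave in $\mu$), weak-$*$ continuous in $\mu$ for every fixed $\psi$, and $\W$ is weak-$*$ compact Hausdorff. Hence the hypotheses of Theorem~\ref{th:minmax} are satisfied.

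It then suffices to compute the two marginal extrema. For fixed $\psi$, the supremum of the affine functional $\mu \mapsto F(\psi,\mu)$ over $\M(X)$ is attained on a Dirac mass at a maximum point of the continuous integrand, so
\[
\sup_{\mu \in \W} F(\psi, \mu) = \sup_{x \in X} \bigl(\phi(x) + \psi(x) - \psi(T x)\bigr).
\]
For fixed $\mu$, if $\mu \in \M_T(X)$ then $\int (\psi - \psi\circ T)\, d\mu = 0$ for every $\psi \in \Z$, so $F(\psi,\mu) = \int \phi\, d\mu$ is independent of $\psi$; if $\mu \notin \M_T(X)$, there exists $\psi_0 \in \Z$ with $\int \psi_0 \, d\mu \neq \int \psi_0 \circ T \, d\mu$, and substituting $\psi = t\psi_0$ and letting $t \to \pm\infty$ with the appropriate sign forces $F(t\psi_0, \mu) \to -\infty$. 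Therefore
\[
\sup_{\mu \in \W} \inf_{\psi \in \Z} F(\psi, \mu) = \sup_{\mu \in \M_T(X)} \int \phi \, d\mu = \alpha(\phi).
\]

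Applying Theorem~\ref{th:minmax} to equate this with $\inf_{\psi \in \Z} \sup_{\mu \in \W} F(\psi, \mu) = \inf_{\psi} \sup_x (\phi + \psi - \psi\circ T)$ yields the desired identity. The main subtlety is the elementary but crucial duality between $T$-invariance of $\mu$ and the vanishing of coboundary integrals $\int (\psi - \psi\circ T)\, d\mu$, which makes the inner infimum equal to $-\infty$ exactly off $\M_T(X)$. As an alternative direct route avoiding the minimax theorem, one may take $\psi_n := -\tfrac{1}{n}\sum_{k=0}^{n-1} S_k \phi$, verify by telescoping that $\phi + \psi_n - \psi_n \circ T = \tfrac{1}{n} S_n\phi$, and invoke Theorem~\ref{th:EO} to obtain $\inf_\psi \sup_x(\phi+\psi-\psi\circ T) \le \inf_n \sup_x \tfrac{S_n\phi(x)}{n} = \alpha(\phi)$; the reverse inequality follows again from $\int (\psi - \psi\circ T)\, d\mu = 0$ for invariant $\mu$ by integrating $\phi + \psi - \psi\circ T$ against any $\mu \in \M_T(X)$ and taking the supremum.
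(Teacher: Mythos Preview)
Your proof is correct and follows essentially the same minimax approach as the paper: both apply Theorem~\ref{th:minmax} to $F(\psi,\mu)=\int(\phi+\psi-\psi\circ T)\,d\mu$ over $C(X)\times\M(X)$ (the paper parametrizes the first variable by the coboundary $\xi=\psi-\psi\circ T$ rather than by $\psi$ itself, a purely cosmetic difference), and compute the two marginal optimizations identically. Your closing alternative route via the explicit test functions $\psi_n$ and Theorem~\ref{th:EO} is a correct and pleasant addition not present in the paper.
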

	\begin{proof}
			Let $\Z:=\{\xi\in C(X);\ \exists \psi\in C(X)\ s.t.\ \xi=\psi-\psi\circ T\}, \W:=\M(X)$,
			and $F(\xi,\mu)=\int (\phi+\xi)d\mu$.
			Then $F(\xi,\mu)$ is affine on $\Z$ and on $\W$, and continuous on $\Z\times \W$.
			We use the minimax theorem and compute 
			\begin{align*}
				\sup_{\mu\in \M(X)} \inf_{\xi \in \Z} \int (\phi+\xi)d\mu
				=\sup_{\mu\in \M(X)} \begin{cases}\int \phi\ d\mu & (\mu\in \M_T)\\
					-\infty & (\mu\notin \M_T)\end{cases}
				=\sup_{\mu\in \M_T(X)} \int \phi \ d\mu.
			\end{align*}
			and
			\begin{align*}
				\inf_{\psi\in C(X)} \sup_{\mu\in \M(X)} \int (\phi+\psi-\psi\circ T)d\mu
				=\inf_{\psi\in C(X)} \sup_{x\in X}  (\phi+\psi-\psi\circ T)(x),
			\end{align*}
			which yields the desired result.
			Note that $\mu$ is $T$-invariant if and only if $\int \psi\circ T d\mu= \int\psi\ d\mu$ for all $\psi\in C(X)$, which implies that if $\mu\notin\M_T(X)$ then there exists $\psi'\in C(X)$ such that $\int \psi'\circ Td\mu\neq \int \psi'\ d\mu$ and thus $\{a\psi'\}_{a\in\Rset}$ provides $\inf_{a\in\Rset} \int a(\psi'\circ T-\psi') d\mu=-\infty$.
	\end{proof}


	\subsection{Abstract variational principle for generalized pressure functions}
	In this section, using the minimax approach, we give a simpler proof of the abstract variational principle for generalized pressure functions presented in \cite{B22, B23}.
	We first briefly recall their main result. See \cite{B22, B23} for its details and related topics.
	Let $(X,d)$ be a (not necessary compact) metric space and let $B$ stands for the $\sigma$-algebra of Borel subsets of $X$. Let $\B$ be a Banach space over $\Rset$
	given by either the set of $B$-measurable and bounded functions, or that of continuous ones, or else that of continuous ones with compact support. We endow $\B$ with the supremum norm $||\phi||_\infty=\sup_{x\in X} |\phi(x)|$. Let $K=\M_a(X)$ be the set of $B$-measurable, regular, and normalized  finitely additive set functions on $X$ with the total variation norm.
	Then, by Banach-Alaoglu theorem, $K$ is compact with respect to the weak $*$-topology.
	Note that the set of Borel $\sigma$-additive probability measures $\M(X)$ is a subset of $K(=\M_a(X))$.
	A function $\Gamma:\B\to\Rset$ is called a \textit{pressure function} if $\Gamma$ satisfies the following conditions for all $\phi,\psi\in\B, \ c\in\Rset,\ \lambda\in[0,1]$;
	\begin{itemize}
		\item[(C1)] Monotonicity: $\phi\le \psi \quad \Rightarrow \quad \Gamma(\phi)\le \Gamma(\psi)$
		\item[(C2)] Translation invariance: $\Gamma(\phi+c)=\Gamma(\phi)+c$
		\item[(C3)] Convexity: $\Gamma(\lambda\phi+(1-\lambda)\psi)\le \lambda\Gamma(\phi)+(1-\lambda)\Gamma(\psi)$.
	\end{itemize}
	
	\begin{theorem}[\cite{B22}]\label{th:VP}
		Let $\Gamma:\B\to\Rset$ be a generalized pressure function and let
		$\A_\Gamma=\{\xi\in\B; \Gamma(-\xi)\le 0\}$ and
		$\h(\mu)= \inf_{\xi\in\A_\Gamma}\ \int \xi\ d\mu$.
		Then the following abstract variational principle holds:
		\begin{align}\label{eqn:VP1}
			\Gamma(\phi)=\sup_{\mu\in K} \left(\h(\mu)+\int \phi\ d\mu \right), \quad \forall\phi\in\B.
		\end{align}
		Moreover,
		\begin{align}\label{eqn:VP2}
		\h(\mu)=\inf_{\phi\in\B} \left(\Gamma(\phi)-\int \phi\ d\mu\right),\quad \forall\mu\in K.
		\end{align}
	\end{theorem}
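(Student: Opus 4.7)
The plan is to apply Fan's minimax theorem (Theorem~\ref{th:minmax}) to the bilinear function
\[
F(\xi,\mu)=\int(\xi+\phi)\,d\mu
\]
on $\A_\Gamma\times K$, evaluate the two iterated extrema separately, and match them with $\Gamma(\phi)$ on one side and $\sup_{\mu\in K}(\h(\mu)+\int\phi\,d\mu)$ on the other. Formula \eqref{eqn:VP2} will then follow from a short direct argument using the existence of a distinguished element of $\A_\Gamma$.

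First I would verify the hypotheses of Theorem~\ref{th:minmax}. The set $\A_\Gamma$ is convex by (C3): for $\xi_1,\xi_2\in\A_\Gamma$ and $\lambda\in[0,1]$,
\[
\Gamma(-(\lambda\xi_1+(1-\lambda)\xi_2))\le \lambda\Gamma(-\xi_1)+(1-\lambda)\Gamma(-\xi_2)\le 0.
\]
By Banach-Alaoglu, $K$ is compact Hausdorff in the weak-$*$ topology; and $F$ is affine in each variable, hence convex in $\xi$ and concave in $\mu$. For each fixed $\xi$, the map $\mu\mapsto\int(\xi+\phi)\,d\mu$ is weak-$*$ continuous by the very definition of the topology, so in particular upper semicontinuous.

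The crucial calculation is the identification $\inf_{\xi\in\A_\Gamma}\sup_{\mu\in K}F(\xi,\mu)=\Gamma(\phi)$. Since every $\mu\in K$ is a positive finitely additive probability and $K\supset\M(X)$ contains the Dirac measures, one has $\sup_{\mu\in K}\int(\xi+\phi)\,d\mu=\sup_{x\in X}(\xi(x)+\phi(x))$. For the inequality $\Gamma(\phi)\le\sup_x(\xi+\phi)(x)$ when $\xi\in\A_\Gamma$: letting $M:=\sup_x(\xi+\phi)(x)$, from $\phi\le M-\xi$ and (C1)--(C2) I get
\[
\Gamma(\phi)\le\Gamma(M-\xi)=M+\Gamma(-\xi)\le M.
\]
For the matching upper bound, the element $\xi_0:=\Gamma(\phi)-\phi$ lies in $\A_\Gamma$ because (C2) yields $\Gamma(-\xi_0)=\Gamma(\phi-\Gamma(\phi))=0$, while $\sup_x(\xi_0+\phi)(x)=\Gamma(\phi)$. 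Fan's theorem then gives
\[
\Gamma(\phi)=\inf_{\xi\in\A_\Gamma}\sup_{\mu\in K}F(\xi,\mu)=\sup_{\mu\in K}\inf_{\xi\in\A_\Gamma}F(\xi,\mu)=\sup_{\mu\in K}\Bigl(\h(\mu)+\int\phi\,d\mu\Bigr),
\]
which is \eqref{eqn:VP1}.

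Formula \eqref{eqn:VP2} can be proved directly from the same observation. For each $\phi\in\B$, since $\xi_0=\Gamma(\phi)-\phi\in\A_\Gamma$, one obtains $\h(\mu)\le\int\xi_0\,d\mu=\Gamma(\phi)-\int\phi\,d\mu$, giving ``$\le$''. Conversely, for each $\xi\in\A_\Gamma$, the choice $\phi=-\xi$ yields $\Gamma(\phi)-\int\phi\,d\mu=\Gamma(-\xi)+\int\xi\,d\mu\le\int\xi\,d\mu$; taking the infimum over $\xi\in\A_\Gamma$ completes the proof. The main obstacle I anticipate is verifying the weak-$*$ topological details for $K=\M_a(X)$, namely that $\mu\mapsto\int f\,d\mu$ is weak-$*$ continuous and that its supremum over $K$ equals the pointwise supremum of $f$; once those technicalities are in hand, the rest of the argument is an essentially algebraic consequence of (C1)--(C3) and the single key fact $\Gamma(\phi)-\phi\in\A_\Gamma$.
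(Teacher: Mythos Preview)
Your argument for \eqref{eqn:VP1} is exactly the paper's: apply the minimax theorem to $F(\xi,\mu)=\int(\xi+\phi)\,d\mu$ on $\A_\Gamma\times K$, identify $\sup_{\mu\in K}\int(\xi+\phi)\,d\mu$ with $\sup_{x\in X}(\xi+\phi)(x)$, bound this below by $\Gamma(\phi)$ via (C1)--(C2), and use the special element $\xi_0=\Gamma(\phi)-\phi\in\A_\Gamma$ for the matching inequality. Your write-up is in fact somewhat cleaner in separating the verification of hypotheses from the two inequalities.

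For \eqref{eqn:VP2} you diverge from the paper. The paper observes that $\tilde{\h}=-\h$ (extended by $+\infty$ off $K$) is convex and lower semicontinuous, and then invokes the Fenchel--Moreau theorem: once \eqref{eqn:VP1} says $\tilde{\h}^*=\Gamma$, biduality gives $\Gamma^*=\tilde{\h}$, which is \eqref{eqn:VP2}. Your route is more elementary and self-contained: the same element $\xi_0=\Gamma(\phi)-\phi$ gives $\h(\mu)\le\Gamma(\phi)-\int\phi\,d\mu$ for every $\phi$, and testing $\phi=-\xi$ with $\xi\in\A_\Gamma$ gives the reverse inequality directly. This avoids any appeal to convex duality and makes it transparent that \eqref{eqn:VP2} is an immediate algebraic consequence of (C1)--(C2) plus the definition of $\A_\Gamma$; the trade-off is that the paper's approach highlights the conjugacy $\Gamma\leftrightarrow\tilde{\h}$ as an instance of Legendre--Fenchel duality, which is conceptually relevant to the rest of the paper (Section~\ref{sec:FR}).
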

	We now give a short proof of Theorem~\ref{th:VP} from the minimax theorem.
	\begin{proof}
		Let $\tilde{\h}(\mu)= \begin{cases} -\h(\mu) & (\mu\in K)\\
					+\infty & (\mu\notin K)\end{cases}$.
		Since the pointwise infimum of proper convex and lower semicontinuous functions is also proper convex and lower semicontinuous, so is $\tilde{\h}$.
		By Fenchel-Moreau theorem, we have $\tilde{\h}^{**}=\tilde{\h}$.
		Therefore, if we prove $\tilde{\h}^*=\Gamma(\phi)$, we will obtain $\tilde{\h}=\tilde{\h}^{**}=\Gamma^*$
		, i.e., Eq.~\eqref{eqn:VP2}.
		
		We now prove $\tilde{\h}^*=\Gamma(\phi)$, i.e., Eq.~\eqref{eqn:VP1}.
		As in the original proof in \cite{B22}, we easily check $\Gamma(\phi)\ge\sup_{\mu\in K} (\h(\mu)+\int \phi d\mu)$. In fact, since
		for any $\phi\in \B$ we have
		$\Gamma(-(\Gamma(\phi)-\phi))=\Gamma(\phi)-\Gamma(\phi)=0$,
		it holds that
		\begin{align*}
			\h(\mu)+\int \phi d\mu \le \int (\Gamma(\phi)-\phi)d\mu+\int \phi d\mu
			=\Gamma(\phi)\int 1\ d\mu
			=\Gamma(\phi).
		\end{align*}
		
		On the other hand, by the minimax theorem with the convexity of $\A_\Gamma$,
		\begin{align*}
			&\sup_{\mu\in K}\left(\h(\mu)+\int \phi \ d\mu\right)=\sup_{\mu\in K}\inf_{\xi\in\A_\Gamma}\left(\int \xi\ d\mu+\int \phi \ d\mu\right)\\
			&=\inf_{\xi\in\A_\Gamma}\sup_{\mu\in K}\left( \int \xi\ d\mu+\int \phi \ d\mu\right)
			=\inf_{\xi\in\A_\Gamma}\sup_{x\in X}(\xi+ \phi)(x)\ge \Gamma(\phi)
		\end{align*}
		holds. Here the last inequality follows from
		\begin{align*}
			\Gamma(\phi)
			\le\Gamma(\phi)-\Gamma(-\xi)
			\le \sup_{x\in X}\ (\phi+ \xi)(x),\quad \forall \xi\in\A_\Gamma.
		\end{align*}
		Note that
		\begin{align*}
		\Gamma(\phi_1)&-\Gamma(\phi_2)
		\le \Gamma(\phi_2+\sup_{x\in X} (\phi_1-\phi_2)(x))-\Gamma(\phi_2)=\sup_{x\in X} (\phi_1-\phi_2)(x)
		\end{align*}
		holds for arbitrary $\phi_1,\phi_2\in\B$ by the properties (C1) and (C2).
	\end{proof}
	\begin{remark}\ 
		\begin{itemize}
		\item[(i)] Note that (C3) implies that the set $\A_\Gamma:=\{\xi\in\B; \Gamma(-\xi)\le 0\}$ is convex.
	In the original paper \cite{B22}, the convexity of $\Gamma$ is assumed but for the proof of Theorem~\ref{th:VP} we only need to assume conditions (C1), (C2) and the convexity of $\A_\Gamma$. As a result, we see that $\Gamma$ is convex by Eq.~\eqref{eqn:VP1}.
		\item[(ii)]As pointed out in \cite{B22}, if the pressure function $\Gamma$ satisfies the following cohomology invariance condition
	\begin{itemize}
		\item[(C4)] $\Gamma(\phi+\psi-\psi\circ T)=\Gamma(\phi)=\Gamma(\phi+\psi\circ T-\psi),\quad \forall \phi,\psi\in\B$,
	\end{itemize}
	then we can replace the set $K$ in Eq.~\eqref{eqn:VP1} with
	\[
	K_{inv}=\left\{\mu\in K; \int\phi\circ T\ d\mu=\int \phi \ d\mu\ \text{for all}\ \phi\in\B \right\}.
	\]
	\end{itemize}
	\end{remark}

\section{Alternative approach: Fenchel-Rockafellar duality}\label{sec:FR}
In the proofs of Theorems~\ref{th:dual}, \ref{th:VP}, we use the minimax theorem presented in Section~\ref{sec:pre}.
However, the following form of minimax theorem is sufficient for our setting.
\begin{lemma}\label{lem:minimax_alt}
	Let $E$ be a Banach space and $E^*$ be its topological dual.
	Let $S\subset E$ and $K\subset E^*$ be non-empty closed convex subsets.
	Let $A:K\to\Rset$ be a finite, upper semicontinuous, and concave function.
	Then
	\[
		\sup_{\mu\in K} \left(\inf_{\xi\in S} \big\{\langle \xi,\mu\rangle +A(\mu)\big\}\right)
		= \inf_{\xi\in S}\left( \sup_{\mu\in K}\big\{\langle \xi,\mu\rangle +A(\mu)\big\}\right),
	\]
	where $\langle\cdot , \cdot \rangle$ stands for a natural paring of $E$ and $E^*$.
\end{lemma}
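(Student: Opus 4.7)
The plan is to derive this minimax equality from the Fenchel--Rockafellar duality theorem rather than from a Fan-type minimax theorem, since $K$ is not assumed to be compact. Set $f := \chi_S$, the indicator function of the closed convex set $S$, and
\[
g(\xi) := \sup_{\mu\in K}\bigl\{\langle\xi,\mu\rangle + A(\mu)\bigr\},\qquad \xi \in E.
\]
Both $f$ and $g$ are proper, convex, and lower semicontinuous on $E$, the function $g$ being a pointwise supremum of continuous affine functionals on $E$. By construction, the right-hand side of the asserted equality is simply $\inf_{\xi\in E}(f(\xi)+g(\xi))$.

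The key technical step is to identify the Fenchel conjugates, taken with respect to the canonical $E$--$E^*$ pairing. Directly, $f^*(\mu) = \sup_{\xi\in S}\langle\xi,\mu\rangle$. For $g^*$, I would introduce the auxiliary function $\psi(\mu) := -A(\mu) + \chi_K(\mu)$ on $E^*$. The hypotheses---convexity and closedness of $K$, together with finiteness, upper semicontinuity, and concavity of $A$---make $\psi$ proper, convex, and weak-$*$ lower semicontinuous. By construction $g = \psi^*$, so by the Fenchel--Moreau biconjugate theorem $g^* = \psi^{**} = \psi$, i.e.\ $g^*(\mu) = -A(\mu) + \chi_K(\mu)$.

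Next I would substitute these into the Fenchel--Rockafellar formula
$\inf_\xi(f(\xi)+g(\xi)) = \sup_\mu\bigl(-f^*(-\mu) - g^*(\mu)\bigr)$.
Using $-f^*(-\mu) = \inf_{\xi\in S}\langle\xi,\mu\rangle$ and $-g^*(\mu) = A(\mu) - \chi_K(\mu)$, the dual side collapses to $\sup_{\mu\in K}\bigl\{\inf_{\xi\in S}\langle\xi,\mu\rangle + A(\mu)\bigr\}$, which is precisely the left-hand side of the claimed equality. Thus the lemma reduces to vanishing of the Fenchel--Rockafellar duality gap for the pair $(f,g)$.

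The main obstacle, and the only nontrivial part of the argument, is discharging the qualification hypothesis required for \emph{strong} duality. A convenient sufficient condition is continuity of $g$ at some point of $\mathrm{dom}(f) = S$; this is automatic whenever $K$ is weak-$*$ bounded (and hence weak-$*$ compact, being weak-$*$ closed), in which case $g$ is real-valued and Lipschitz on all of $E$. This comfortably covers the applications to Theorems~\ref{th:dual} and~\ref{th:VP}. In the generality stated, one may alternatively invoke the Attouch--Brezis closedness qualification applied to $\mathrm{dom}(g) - \mathrm{dom}(f)$.
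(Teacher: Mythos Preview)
Your approach is essentially identical to the paper's: both reduce the lemma to Fenchel--Rockafellar duality applied to the pair consisting of the indicator $\chi_S$ and the function $\xi \mapsto \sup_{\mu\in K}\{\langle\xi,\mu\rangle + A(\mu)\}$ (you merely swap which one is called $f$ and which $g$), and both invoke Fenchel--Moreau on the auxiliary function $-A+\chi_K$ to compute the conjugate of the latter. The only substantive difference is that you explicitly isolate the constraint qualification as the delicate point and observe it is automatic once $K$ is weak-$*$ bounded (as in the paper's applications), whereas the paper simply asserts without further comment that its $f$ is continuous on $E$; in this respect your treatment is the more careful one.
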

In this section, we describe a connection between minimax theorem of the form Lemma~\ref{lem:minimax_alt} and the Fenchel-Rockafellar duality \cite{R66}.
\begin{theorem}[Fenchel-Rockafellar duality]
Let $E$ be a Banach space and denote by $E^*$ its topological dual.
Let $f,g:E\to \Rset\cup\{+\infty\}$ be proper convex functions.
If either $f$ or $g$ is continuous at some point where both functions are finite, then
\begin{align*}
	\inf_{e\in E}(f(e)+g(e))=\sup_{e^*\in E^*}(-f^*(e^*)-g^*(-e^*)).
\end{align*}
where $f^*(e^*)=\sup_{e\in E}\{\langle e,e^*\rangle-f(e)\}, \quad g^*(e^*)=\sup_{e\in E}\{\langle e,e^*\rangle-g(e)\}$.
\end{theorem}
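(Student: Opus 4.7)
The plan is to establish the identity in two stages: the weak duality inequality $\ge$, which is a direct consequence of the Fenchel conjugate definitions, and the strong duality $\le$, which I would obtain from a Hahn-Banach separation argument in $E\times\Rset$.

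For weak duality I would start from the elementary fact that, for every $e\in E$ and every $e^*\in E^*$, the definitions of $f^*$ and $g^*$ yield
\[
f(e)\ge \langle e,e^*\rangle - f^*(e^*),\qquad g(e)\ge -\langle e,e^*\rangle - g^*(-e^*).
\]
Adding these, taking the infimum over $e$ on the left and the supremum over $e^*$ on the right immediately gives the $\ge$ direction.

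For strong duality I set $\alpha:=\inf_{e\in E}(f(e)+g(e))$; the case $\alpha=-\infty$ is handled by weak duality alone, so I may assume $\alpha\in\Rset$. I would then introduce the two convex subsets of $E\times\Rset$
\[
A=\{(e,r):\, r> f(e)\},\qquad B=\{(e,r):\, r\le \alpha-g(e)\}.
\]
The continuity hypothesis makes $A$ a non-empty \emph{open} convex set (it is the strict epigraph of a convex function continuous at an interior point of its effective domain), and the definition of $\alpha$ forces $A\cap B=\emptyset$, since a common point would give $f(e)+g(e)<\alpha$. The geometric form of Hahn-Banach then produces a non-zero pair $(e^*,\beta)\in E^*\times\Rset$ and a real constant $\gamma$ separating $A$ and $B$.

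The crux of the proof is extracting the dual element from this separating hyperplane, and this is where I expect the main obstacle to lie. Letting $r\to+\infty$ inside $A$ and $r\to-\infty$ inside $B$ forces $\beta\ge 0$; the degenerate possibility $\beta=0$ has to be ruled out by using that the domain of $f$ contains an open ball around a point at which both functions are finite, which would otherwise force $e^*=0$ and contradict non-triviality of the hyperplane. After rescaling to $\beta=1$, the two separating inequalities translate into $\langle e,e^*\rangle+f(e)\ge \gamma$ and $\langle e,e^*\rangle-g(e)\le \gamma-\alpha$ for all $e$ in the respective effective domains, that is,
\[
f^*(-e^*)\le -\gamma,\qquad g^*(e^*)\le \gamma-\alpha.
\]
Adding gives $-f^*(-e^*)-g^*(e^*)\ge \alpha$, and the substitution $\tilde e^*:=-e^*$ produces an element of $E^*$ achieving the bound $\alpha$ in $\sup_{e^*}(-f^*(e^*)-g^*(-e^*))$, which combined with weak duality completes the proof. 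Careful sign bookkeeping in the conversion from separation to conjugate inequalities, and the use of continuity to exclude $\beta=0$, are the two delicate points I would watch most closely.
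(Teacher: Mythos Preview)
The paper does not give its own proof of this theorem: it is stated as the classical Fenchel--Rockafellar duality with a reference to \cite{R66} and is then \emph{used} as a tool to derive Lemma~\ref{lem:minimax_alt}. There is therefore no in-paper argument to compare against.

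Your outline is the standard separation proof and is essentially correct. One small imprecision: the strict epigraph $A=\{(e,r):r>f(e)\}$ is in general not an open set under the stated hypothesis (you only assume continuity of $f$ at a single point, not everywhere). What the hypothesis does guarantee is that $A$ has nonempty interior, and that is all you need for the geometric Hahn--Banach separation of $\operatorname{int}A$ from $B$; the separating hyperplane then automatically separates $A$ itself. The exclusion of $\beta=0$ and the subsequent sign bookkeeping are handled correctly.
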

Now we prove the minimax theorem of the form Lemma~\ref{lem:minimax_alt} by using of the Fenchel-Rockafellar duality.
\begin{proof}[Proof of Lemma~\ref{lem:minimax_alt}]
	Consider $\tilde{A}(\mu)=\begin{cases} -A(\mu) & (\mu\in K)\\
					+\infty & (\mu\notin K)\end{cases}$.
	By the assumption, $\tilde{A}$ is proper, convex and lower semicontinuous function on $E^*$.
	Therefore, by the Fenchel-Moreau theorem, we have $\tilde{A}^{**}=\tilde{A}$.
	Let $f(\xi)=\sup_{\mu\in K} \big\{\langle \xi,\mu\rangle +A(\mu)\big\}$ and
	$g(\xi)= \begin{cases} 0 & (\xi\in S)\\
					+\infty & (\xi\notin S)\end{cases}$.
	Then $f=\tilde{A}^*$ holds since
	\begin{align*}
		\tilde{A}^*(\xi)&=\sup_{\mu\in E^*}\{\langle \xi,\mu\rangle-\tilde{A}(\mu)\}
		=\sup_{\mu\in E^*} \begin{cases} \langle \xi,\mu\rangle+A(\mu) & (\mu\in K)\\
					-\infty & (\mu\notin K)\end{cases}
		=f(\xi),
	\end{align*}
	and thus $f,g$ are proper, convex and lower semicontinuous.
	It is easy to see that $f$ is continuous on $E$.
	The convex conjugates of $f$ and $g$ are given by
	\begin{align*}
		f^*(\mu)&=\tilde{A}^{**}(\mu)=\tilde{A}(\mu)
	\end{align*}
	and
	\begin{align*}
		g^*(\mu)=\sup_{\xi\in E}\left(\langle\xi,\mu\rangle-\begin{cases} 0 & (\xi\in S)\\
					+\infty & (\xi\notin S)\end{cases}\ \ \right) =\sup_{\xi\in S} \langle\xi,\mu\rangle.
	\end{align*}
	We compute
	\begin{align*}
		\inf_{\xi\in E} (f(\xi)+g(\xi))=\inf_{\xi\in S} \sup_{\mu\in K} \big\{A(\mu)+\langle\xi,\mu\rangle\big\}
	\end{align*}
	and
	\begin{align*}
		\sup_{\mu\in E^*} (-f^*(\mu)-g^*(-\mu))
		&=\sup_{\mu\in E^*} \left(\begin{cases} A(\mu) & (\mu\in K)\\
					-\infty & (\mu\notin K)\end{cases}-\sup_{\xi\in S} \langle \xi,-\mu\rangle\right)\\
		&=\sup_{\mu\in K}\inf_{\xi\in S} \big\{A(\mu)+\langle\xi,\mu\rangle\big\},
	\end{align*}
	which yields Lemma~\ref{lem:minimax_alt} by the Fenchel-Rockafellar duality.
\end{proof}




\bibliographystyle{alpha}
\bibliography{minmaxref}

\end{document}